\newtheorem{thm}{Theorem}[section]
\newtheorem{prop}[thm]{Proposition}
\newtheorem{cor}[thm]{Corollary}
\newtheorem{defn}[thm]{Definition}
\newtheorem{remark}{Remark}
\newcommand{\ZZ}{\mathbb{Z}}      
\newcommand{\inimg}[2]{\begin{array}{c}\includegraphics[width=#2]{#1}\end{array}}
\newcommand{\iso}{\cong}
\newcommand{\bracket}[1]{\left\langle #1 \right \rangle}
\newcommand{\parens}[1]{\!\left( #1 \right)}
\journal{Topology and its Applications}
\title{Efficient Computation of the Kauffman Bracket}
\author[le]{Lauren Ellenberg}
\author[gn]{Gabriella Newman} 
\author[ss]{Stephen Sawin}
\author[js]{ Jonathan Shi}
\address[le]{Arcadia College, laellenberg@gmail.com}
\address[gn]{Carleton College, gabriellanewman@gmail.com}
\address[ss]{ Fairfield University,
  ssawin@fairfield.edu}
\address[js]{University of Washington, jshi@cs.washington.edu}
\begin{document}

\begin{keyword}
Kauffman Bracket \sep Jones Polynomial \sep knot theory \sep link invariants \sep tangles \sep girth \sep computational complexity 


\end{keyword}
\maketitle

\begin{abstract}
This paper bounds the computational cost of computing the Kauffman
bracket of a link in terms of the crossing number of that link.
Specifically, it is shown that the image of a tangle with $g$ boundary
points and $n$ crossings in the Kauffman
bracket skein module is a linear combination of $O(2^g)$ basis
elements, with each coefficient a polynomial with at most $n$ nonzero terms,
each with integer coefficients, and that the link can be built one
crossing at a time as a sequence of tangles with maximum number of
boundary points bounded by $C\sqrt{n}$ for some $C.$  From this it
follows that the computation of the Kauffman bracket of the link takes
time and memory a polynomial in $n$ times $2^{C\sqrt{n}}.$

\end{abstract}

\section{Introduction}

One of the principal advantages of the so-called quantum link invariants has always been their computability, in contrast to the intuitive but pragmatically uncomputable traditional link invariants, such as crossing number.  Indeed the simplest, the Kauffman bracket, can realistically be taught to a clever high school student.  It is thus a reasonable question what the computational complexity of the Kauffman bracket is. It is also a practical question.   For instance, there are conjectures, such as the claim that the Jones polynomial can detect the unknot, which could reasonably be tested empirically if the computation of the bracket or Jones polynomial were tractible for large knots.  Perhaps more interestingly, the computability of the Kauffman bracket is closely related to that of its cousin the Khovanov Homology.  In \cite{FGMW10}, Freedman, Gompf, Morrison and Walker computed it for large links in the hopes of finding counterexamples to the smooth $4$-dimensional Poincar\'e conjecture, and suggested larger links that it would be interesting to compute.   

The naive algorithm to compute the Kauffman bracket (smooth each crossing) is exponential in the number of crossings.  Although it is conceptually simple, this makes the computation effectively impossible for large links.  But in fact the local nature of the Kauffman bracket allows considerable savings from divide and conquer style approaches, and a heuristic argument that they reduce the cost to exponential in the square root of the number of crossings is not difficult.  For many years this claim had the status of a folk theorem.   The analogous statement for the Khovanov Homology was explictly conjectured by Bar-Natan in \cite{BarNatan02,BarNatan07}, who also sketched the algorithm. 

In this article the authors prove that the Kauffman bracket can be computed in time exponential in the square root of the number of crossings.  Because the computation happens on tangles, rather than links, several well known results about the nature of the Kauffman bracket on links must be generalized to tangles.  Specifically, in the expansion of a tangle $T$ as a linear combination of basis tangles with Laurent polynomials for coefficients, it is proved that the exponents occurring in each polynomial agree modulo $4,$ and that the span is bounded by four times the number of crossings. Also, bounds on the cutwidth of a planar graph are used to bound the girth of a link, giving the square root of the number of crossings which appears in the exponent.  

The authors would like to thank the National Science Foundation and Fairfield University for their support of the Fairfield University REU program, during the 2012 summer of which this work was produced.

\section{The Kauffman Skein Modules}

 \subsection{The Kauffman bracket}

\begin{defn}
A \textbf{framed link} (\cite{Turaev94}) is a link together with a nowhere zero vector field on
the range of the embedding, considered up to isotopy. A diagram of a
framed link is always presumed to have the vector field pointing up
(directly at the reader).
\end{defn}

\begin{thm}[Reidemeister \cite{BZ86,Trace83}] \label{reidemeister}
Two framed link diagrams represent the same framed link if and only if they can be 
connected by a sequence of the following three framed Reidemiester moves and 
their reverses.  Here the moves are understood to relate any two framed link 
diagrams that agree outside the dotted circle. 

R1: $\inimg{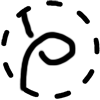}{1em} \leftrightarrow \inimg{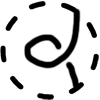}{1em}$

R2: $\inimg{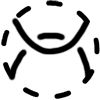}{1em} \leftrightarrow \inimg{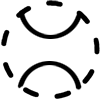}{1em}$

R3: $\inimg{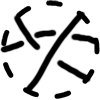}{1em} \leftrightarrow
\inimg{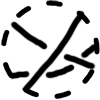}{1em}
$
\end{thm}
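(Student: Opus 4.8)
The plan is to bootstrap from the classical (unframed) Reidemeister theorem \cite{BZ86,Trace83} rather than to redo a general-position argument from scratch. First I would dispose of the easy direction: each of the three framed moves, performed inside the dotted disk and read with the blackboard framing, visibly extends to an ambient isotopy of the framed link. For R2 and R3 this is the usual planar picture, and the normal vector field (pointing at the reader) is never disturbed; for the framed R1 move the two sides differ by an isotopy of the underlying curve together with a check that the writhe, and hence the framing number of that component, is unchanged. So framed-move-equivalent diagrams represent the same framed link.

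For the converse, suppose framed link diagrams $D$ and $D'$ represent the same framed link. Forgetting the framing, they represent the same unframed link, so by the classical theorem there is a finite sequence of planar isotopies and \emph{ordinary} Reidemeister moves joining them: single-kink moves $\Omega_1^{\pm}$ (create or delete a kink of either sign), the two-strand move $\Omega_2$ (which is R2), and the triangle move $\Omega_3$ (which is R3). Since $\Omega_2$ and $\Omega_3$ are already framed moves, the entire problem is to eliminate the $\Omega_1^{\pm}$ steps, which alter the blackboard framing. I would isolate two local lemmas, each proved by a small explicit picture: (i) a canceling pair of kinks, one of each sign, can be inserted into, or removed from, any arc of a diagram using only the framed moves R1 and R2; and (ii) a single kink can be slid along its strand past any crossing or other kink using only R1, R2 and R3. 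Granting these, I normalize the classical sequence as follows. For each component $K$, the self-writhe $w_K$ (the signed count of crossings of $K$ with itself) of a blackboard-framed diagram equals the framing number of $K$, so the hypothesis gives $w_K(D) = w_K(D')$; along the sequence only the $\Omega_1^{\pm}$ moves change any $w_K$, each by $\pm 1$ and on a single component, so the signed count of $\Omega_1$ moves on each component is zero. I then rewrite the sequence: each kink-creating $\Omega_1^{+}$ on $K$ is replaced, using (i), by the creation of a canceling pair; the intended kink is left in place, and the unwanted kink of opposite sign is carried along by (ii) until it reaches the partner kink of a matching $\Omega_1^{-}$ on the same component, where (i) annihilates the two. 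After this rewriting every step is R1, R2, or R3, which proves the theorem.

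The hard part will be the bookkeeping in this last step: as the diagram evolves along the sequence I must track the positions of the deferred kinks, route them past crossings and kinks that appear and disappear using (ii), and verify that a kink and its intended partner can always be brought adjacent and cancelled without interfering with the remaining moves; multi-component links also require the pairing of $\Omega_1^{+}$ with $\Omega_1^{-}$ moves to respect components. Everything else is either the cited classical theorem or a finite check of small diagrams, and the precise shape of the framed R1 move enters the argument only through lemmas (i) and (ii).
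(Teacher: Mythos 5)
The paper does not prove this theorem; it is quoted from the literature, and \cite{Trace83} is the precise reference for the framed refinement (Trace shows two knot diagrams are related by moves of types II and III alone if and only if they have the same writhe and the same winding number). Your argument is essentially the standard proof of that refinement and its skeleton is sound: the only obstruction to eliminating the ordinary $\Omega_1$ moves is the self-writhe of each component, which is exactly the blackboard framing number and which the hypothesis forces to agree. Two remarks on the part you defer. First, lemma (i) is only a ``one-picture check'' once you fix which version of the framed R1 move the figures depict; if R1 equates the two curls of equal writhe but opposite winding, then the cancellation of an adjacent $+$ and $-$ kink must itself be derived from R1 together with R2 --- a short but genuine derivation, not a restatement. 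Second, the temporal bookkeeping you identify as the hard part can be reorganized away: replace every $\Omega_1^{\pm}$ of the classical sequence by its framed surrogate immediately (insert a cancelling pair, keep or annihilate the intended kink, and leave one ``debt'' kink behind), so that at the end of the sequence you hold $D'$ decorated with finitely many extra kinks whose total self-writhe on each component is zero; then use (ii) to gather them and (i) to cancel them in pairs. With that arrangement the only interference to check is that a debt kink sitting inside the disk of the next classical move can first be slid out along its arc, which is again lemma (ii), and the multi-component pairing is automatic because each $\Omega_1$ move touches a single component. So the proposal is a correct route to a result the paper simply cites, and the residual work is exactly the finite diagrammatic lemmas (i) and (ii) plus this normalization.
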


\begin{thm}[\cite{kauffman85}]\label{def:kauffbrack}
There is a unique assignment of a polynomial in
  $\ZZ[A,A^{-1}]$ to each framed link, called the Kauffman
  bracket $\langle L\rangle$ of the framed link $L,$  which sends the trivial link to
  $1$ and obeys the following two rules. 
These rules relate  the brackets of framed link diagrams that
  agree 
outside the dotted circle. 

\begin{equation} \label{eq:kauffman1}
\langle \inimg{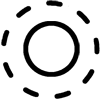}{1em}\rangle=(-A^2-A^{-2})\langle
\inimg{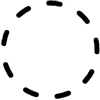}{1em}\rangle
\end{equation}
\begin{equation} \label{eq:kauffman2}
\langle \inimg{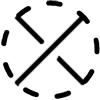}{1em}\rangle=A\langle \inimg{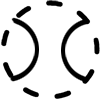}{1em}\rangle+A^{-1}\langle \inimg{2B1T.png}{1em}\rangle
\end{equation}

\end{thm}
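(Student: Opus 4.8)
The plan is to establish uniqueness by reducing an arbitrary diagram to crossingless ones with the two rules, and existence by writing down the usual state sum and verifying it is invariant under the framed Reidemeister moves of Theorem~\ref{reidemeister}.

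For uniqueness I would induct on the number of crossings of a link diagram $D$. When $D$ has no crossings it is a disjoint union of $k$ embedded circles, and peeling the circles off one at a time with \eqref{eq:kauffman1} forces $\bracket{D}=\parens{-A^2-A^{-2}}^{k-1}$, using that the trivial knot is sent to $1$. When $D$ has $n>0$ crossings, choosing any crossing and applying \eqref{eq:kauffman2} expresses $\bracket{D}$ as an explicit $\ZZ[A,A^{-1}]$-linear combination of the brackets of two diagrams with $n-1$ crossings, which are determined by the inductive hypothesis. Hence the value on every diagram, and thus on every framed link, is forced, so at most one assignment obeying the rules can exist.

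For existence I would set, for a link diagram $D$ with set of crossings $X$,
\[
\bracket{D} \;=\; \sum_{s\colon X\to\{A,B\}} A^{a(s)-b(s)}\,\parens{-A^2-A^{-2}}^{\abs{s}-1},
\]
where $s$ ranges over all ways of replacing each crossing by an $A$-smoothing or a $B$-smoothing (in the sense fixed by \eqref{eq:kauffman2}), $a(s)$ and $b(s)$ count the two kinds of smoothing, and $\abs{s}$ is the number of loops in the resulting crossingless diagram. This visibly lies in $\ZZ[A,A^{-1}]$ and takes the value $1$ on the trivial knot; inserting a disjoint simple closed curve raises $\abs{s}$ by one in every state, which gives \eqref{eq:kauffman1}, and partitioning the states according to the smoothing chosen at one fixed crossing gives \eqref{eq:kauffman2}. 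It then remains to check that $\bracket{D}$ is unchanged by the three framed Reidemeister moves: invariance under R2 is a bookkeeping computation over the four states at its two crossings; invariance under R3 follows formally from that by resolving one of its crossings and applying R2 to each resulting term; and invariance under the framed R1 move is a short direct computation, the point being that this move, rather than the classical R1 under which the state sum merely scales by $-A^{\pm3}$, is the one the bracket respects. Invariance makes $\bracket{\cdot}$ descend to a well-defined function on framed links still obeying \eqref{eq:kauffman1} and \eqref{eq:kauffman2}.

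The main obstacle is the invariance verification --- laying out the R2 computation and deducing R3 from it cleanly, and tracking the framing convention carefully enough that the move labeled R1 in Theorem~\ref{reidemeister} is exactly the one under which the state sum is genuinely unchanged. Everything else is routine manipulation of the state sum.
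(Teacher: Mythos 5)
The paper does not prove this statement at all: it is quoted as Kauffman's theorem with a citation to \cite{kauffman85}, so there is nothing internal to compare against. Your proposal is the classical argument and it is correct. The uniqueness half is exactly as you say: the rules force the value on every diagram by induction on crossings, and since a framed link is represented by some diagram, at most one assignment to framed links can satisfy them. The existence half is where all the content lives, and your state sum
\[
\bracket{D} = \sum_{s} A^{a(s)-b(s)}\parens{-A^2-A^{-2}}^{\abs{s}-1}
\]
manifestly satisfies \eqref{eq:kauffman1} and \eqref{eq:kauffman2} and is normalized correctly on the unknot; the real work is Reidemeister invariance, and you have the right structure there: R2 by direct expansion of the four states, R3 by resolving one crossing and reducing to R2 plus planar isotopy, and R1 in its framed form (the writhe-preserving kink cancellation of Theorem~\ref{reidemeister}) by noting that the two opposite kinks contribute factors $-A^{3}$ and $-A^{-3}$ that cancel, whereas the classical R1 would rescale the sum by $-A^{\pm 3}$ --- which is precisely why the theorem is stated for \emph{framed} links. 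The only point worth making explicit in a full write-up is that well-definedness on framed links also uses the framed Reidemeister theorem (Theorem~\ref{reidemeister}) to know that diagram invariance under those three moves suffices; you implicitly invoke this and it is exactly what that theorem provides. In short: the paper outsources this proof to the literature, and your blind reconstruction is the standard one with no gaps.
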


\subsection{Skein modules}

\begin{defn}
 A \textbf{tangle diagram} $T$ is a finite multigraph embedded in  a disk with the following properties.  Every vertex is either quadravalent, bivalent or univalent.  Univalent vertices are only on the boundary, their unique edge is transverse to the boundary and the graph otherwise does not intersect the boundary.  Each bivalent vertex meets the same edge on both sides to form a simple closed loop. Each quadravalent vertex (called a crossing) comes equipped with a pair of opposite edges labeled as ``over'' and the other pair labeled ``under,'' the labeling represented visually exactly as are over and under strands in a crossing of a link diagram.  Tangle diagrams are considered up to    
 ambient isotopy of the interior of the disk.  
 A \textbf{(framed) tangle} is a tangle diagram considered   
 up to the (framed) Reidemeister moves. 
\end{defn}

In any link diagram, the interior of any disk whose boundary intersects the projection of the link transversely can be naturally identified with a tangle diagram, provided a bivalent vertex is chosen for each standardly embedded separated unknot component.  Replacing this tangle diagram by another which represents the same tangle gives a different projection of the same link by Theorem~\ref{reidemeister}.

\begin{defn} The boundary circle of a tangle $T,$ together with the univalent vertices on this boundary, is called the \textbf{type} of $ T.$ 
  Vertices on the boundary of the disk are called \textbf{boundary
    points.} The disk with the multigraph removed  is a union of connected faces, which are either  
\textbf{boundary faces} which touch the boundary, or \textbf{interior
   faces} which do not. 
A \textbf{base tangle diagram} of 
a given type
 is a crossingless, loopless tangle diagram (i.e., only univalent vertices). 
\end{defn}

For any  type $B,$ consider the
free module $\mathcal{D}_{B}$ over the ring $\ZZ[A,A^{-1}]$ generated by the set of all planar isotopy equivalence classes of framed 
tangle diagrams of the  given type, the free module $\mathcal{T}_{B}$ generated  by framed tangles of the given type, and the free module $\mathcal{B}_{B}$ generated by planar isotopy classes of base tangle diagrams. Equations like R1-R3 and Eqs.~\eqref{eq:kauffman1} and~\eqref{eq:kauffman2} relating tangle diagrams define elements of $\mathcal{D}_{B}$ or $\mathcal{T}_{B}$ by taking two or three tangle diagrams  that agree except in a small disk where they look as in the equation and taking the difference of the left and right sides of the equation.  Let $\mathcal{R}_{B}$ be the submodule of $\mathcal{D}_{B}$ generated by the Reidemeister moves, and $\mathcal{K}_{B}$ be the submodule of either $\mathcal{D}_{B}$ or $\mathcal{T}_{B}$ (by abuse of notation) generated by Eqs.~\eqref{eq:kauffman1} and~\eqref{eq:kauffman2}.  Then by definition $\mathcal{T}_{B} \iso \mathcal{D}_{B}/\mathcal{R}_{B}$ as $\ZZ[A,A^{-1}]$ modules, the standard proof that the Kauffman bracket is invariant under Reidemeister moves  shows 
\[\mathcal{R}_{B} \subset \mathcal{K}_{B}\]
and the standard argument that the Kauffman relations determine the Kauffman bracket up to an overall factor readily extends to show that the natural embedding $\mathcal{B}_{B}$ into $\mathcal{D}_{B}$ gives
\[\mathcal{B}_{B} \iso \mathcal{D}_{B}/\mathcal{K}_{B} \iso \mathcal{T}_{B}/ \mathcal{K}_{B}.\]
Call this free quotient the
\textbf{Kauffman skein module} $\mathcal{M}_{B}.$   Thus every framed tangle $T$ can be
written uniquely as a linear combination $\bracket{T}$ of basis tangles with the
same boundary, the coefficients being elements of $\ZZ[A,A^{-1}].$

\subsection{Locality}

The fundamental property of the Kauffman skein module that will be used repeatedly is its \emph{locality} \cite{BarNatan02}[Sec. 7].  Specifically, consider a tangle diagram $T$ of type $B$  and consider a disk inside it that intersects the diagram transversely, and thus determines a subtangle of type $B'.$  It is natural to imagine gluing any element of $\mathcal{T}_{B'}$ into $T,$ in the sense that an element of $\mathcal{T}_{B'}$ is a linear combination (with coefficients in $\ZZ[A,A^{-1}]$) of tangle diagrams of type $B',$ each of which can be glued into the disk to get a new tangle of type $B.$ The same linear combination of these new tangles gives an element of $\mathcal{T}_{B}.$  This is well defined because planar isotopy of the internal disk is a special case of planar isotopy of the larger disk.  Thus each tangle and disk intersecting it appropriately defines a map from $\mathcal{T}_{B'}$ into $\mathcal{T}_{B},$ and these maps all send $\mathcal{R}_{B'}$ to $\mathcal{R}_{B}$ and $\mathcal{K}_{B'}$ to $\mathcal{K}_{B}.$  This means that if $T'$ were the portion of $T$ within the inner disk, and we computed $\bracket{T'},$ we could glue that back in to get a linear combination of tangle diagrams which, as an element of $\mathcal{T}_{B}$ is mapped by the quotient to $\bracket{T} \in \mathcal{M}_B.$  Thus we can compute $\bracket{T}$ by first computing the Kauffman bracket of a subtangle.  This suggests the strategy for computing the Kauffman bracket of a tangle by choosing an increasing sequence of disks wisely and simplifying the tangle within the disks sequentially.  Of course, the Kauffman bracket  of a link can be naturally identified with the coefficient of the empty link in its image in the one-dimensional Kauffman skein module of the disk with no boundary points, so computing the Kauffman bracket of a link is a special case of this algorithm.

\section{Coefficient Exponents}

\subsection{Checkerboarding}
\begin{defn}
A \textbf{checkerboarded tangle} is a tangle in which each face has
been marked as either dark  or light, so that no dark face shares an edge with a different dark face and no light face shares an edge with a different light face.  Since any tangle can be embedded in a  link diagram, and since these can be checkerboarded, there exists a checkerboarding of every tangle, and in fact there are obviously two for each tangle [Figure \ref{fig:checktngl}].
The choice of color is determined by the choice on the
  type, and thus a checkerboarding of a tangle determines a
  checkerboarding of all tangles with the same type.
\begin{figure}[!ht]
\centering
  \subfloat[A Tangle]{\includegraphics[width=0.25\textwidth]{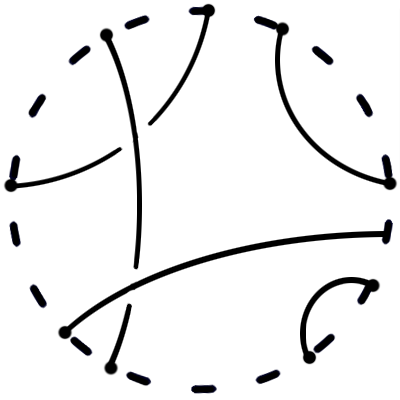}}
  ~ ~
  \subfloat[Checkerboarded ($e=2,$ $w=-2$)]{\includegraphics[width=0.25\textwidth]{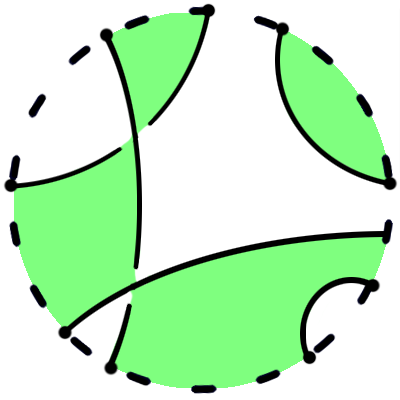}}
  ~~
  \subfloat[Alternate checkerboarding ($e=2,$ $w=2$)]{\includegraphics[width=0.25\textwidth]{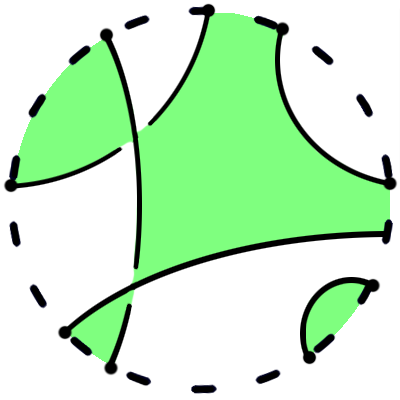}}
  \caption{A checkerboarded tangle}\label{fig:checktngl}
\end{figure}

\end{defn}
 
Of course the definition of tangle is meant to represent something three-dimensional without explicitly saying it.  A tangle diagram can be \emph{thickened,} or turned into a three dimensional object as follows. Embed the domain in the three sphere $S^3.$ For each crossing, replace a small neighborhood of the crossing with two strands each connecting opposite edges of the crossing, with the original two edges labeled ``over'' connected by a strand going over the strand connecting the two edges labeled ``under'' (after making a global choice of a direction to be up).  The result is an embedding of a collection of intervals and loops in $S^3,$ with the boundary points of the intervals coinciding with the original boundary vertices.  

If the tangle was checkerboarded, then each of the crossings has two dark faces touching it.  When replacing the quadrivalent vertex with a literal crossing, the two dark faces can be joined into a single dark surface connecting the over and under strands.  Doing this over the entire checkerboarded tangle yields a surface with boundary (the boundary consists of all the embedded intervals and loops making up the three-dimensional tangle and all the dark portions of the boundary of the disk).  Isotopies of the domain correspond to isotopies of the dark surface.  If $\tilde{T}$ is a checkerboarded tangle let $e_{\tilde{T}}$ be the Euler number (Euler characteristic) of the dark surface.  Each crossing is labeled positive or negative depending on whether the dark surface rotates clockwise or counterclockwise as you pass through it (this is independent of the direction in which you pass through).  Define $w_{\tilde{T}}$ to be the number of positive crossings minus the number of negative crossings in $\tilde{T}$ (more invariantly, this is the winding number of the framing vector relative to the surface) [Figure~\ref{fig:checktngl}].

\subsection{Invariance mod $4$}
\begin{thm}\label{thm:mod4main}
In the skein module expansion of any tangle $T $ in terms of
basis tangles, for any particular basis tangle $ B$ in the
expansion, the coefficient contains only powers of A which are equal
$\!\!\pmod 4$. 
Choosing a checkerboarding of $T,$ the 
exponents 
 are congruent to $w_{\tilde{T}} +  2 e_{\tilde{T}} -  2e_{\tilde{B}} 
 \pmod 4.$
\end{thm}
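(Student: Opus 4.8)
The plan is to prove the second, sharper assertion --- that every exponent occurring in the coefficient of $B$ is congruent to $w_{\tilde{T}}+2e_{\tilde{T}}-2e_{\tilde{B}}$ modulo $4$ (the first assertion then being immediate) --- by induction on the number $n$ of crossings of the diagram $T$. Throughout, fix one of the two checkerboardings of $T$ at the outset; since a checkerboarding is determined by its restriction to the type, this simultaneously fixes compatible checkerboardings on $B$ and on every diagram gotten from $T$ by smoothing crossings, so all dark surfaces below are well defined and $e_{\tilde{B}}$ is common to $T$ and to each of its smoothings. It helps to recall that the dark surface of a checkerboarded diagram is built from one disk per dark face together with one (half-twisted) band per crossing, so that $e_{\tilde{T}}=\bigl(\sum_{f\text{ dark}}\chi(f)\bigr)-n$; for a crossingless, loopless diagram every face is a disk, so $e_{\tilde{B}}$ is just the number of dark faces of $B$.

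For the base case $n=0$, the diagram $T$ is a base tangle diagram together with some number $k$ of disjoint simple loops; applying \eqref{eq:kauffman1} to an innermost loop $k$ times removes them all and yields $\bracket{T}=(-A^2-A^{-2})^k\,B$, with $B$ the loopless base tangle and every other basis coefficient zero. Every monomial of $(-A^2-A^{-2})^k$ has exponent congruent to $2k$ modulo $4$, and since $w_{\tilde{T}}=0$ it remains to check $e_{\tilde{T}}-e_{\tilde{B}}\equiv k\pmod 2$. Building $T$ from $B$ by inserting the loops one at a time, each into a face of the diagram so far, inserting into a light face adds a new dark disk, inserting into a dark face replaces a dark face by one with an extra boundary circle, and either way $\sum_{f\text{ dark}}\chi(f)$ changes by $\pm 1$, so $e$ changes by an amount congruent to $k$ modulo $2$ over the $k$ steps.

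For the inductive step, I would pick a crossing $c$ of $T$ and apply \eqref{eq:kauffman2} there, writing $\bracket{T}=A\bracket{T'}+A^{-1}\bracket{T''}$ with $T'$, $T''$ the two smoothings of $c$, each a checkerboarded diagram of the same type with $n-1$ crossings. By the inductive hypothesis it then suffices to show the prefactor is absorbed modulo $4$: with $\Delta':=\bigl(w_{\widetilde{T'}}-w_{\tilde{T}}\bigr)+2\bigl(e_{\widetilde{T'}}-e_{\tilde{T}}\bigr)$ and $\Delta''$ defined likewise from $T''$, one needs $\Delta'\equiv-1$ and $\Delta''\equiv+1\pmod 4$. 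Three local facts yield this. First, passing from $T$ to either smoothing deletes the crossing $c$ and leaves the signs of all other crossings unchanged, so $w$ changes by $-1$ if $c$ is positive and by $+1$ if $c$ is negative. Second, exactly one of the two smoothings of $c$ fuses the two dark sectors there: that smoothing replaces the half-twisted band of the dark surface at $c$ by an untwisted strip inside a single dark region, which does not change the Euler characteristic of the dark surface, so $e$ is unchanged; the other smoothing fuses the two light sectors, deleting that band and hence raising the Euler characteristic, and $e$, by $1$. Third --- and this is the step to be done by direct inspection --- comparing the definition of a positive crossing with the over/under data that fixes which diagram appears with coefficient $A$ in \eqref{eq:kauffman2} shows that $T'$ is the dark-sector-fusing smoothing exactly when $c$ is positive. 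Feeding these into the four cases ($c$ positive or negative, against $T'$ or $T''$) gives $\Delta'\equiv-1$ and $\Delta''\equiv+1\pmod 4$ each time, closing the induction.

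The step I expect to be the main obstacle is the third local fact: it requires reconciling three conventions at a single crossing --- the definition of a positive crossing via the sense in which the dark surface rotates, the rule (carried by the over/under labeling in \eqref{eq:kauffman2}) for which resolution bears the coefficient $A$, and the effect of each resolution on the dark surface --- and it is exactly this matching, not any estimate, that forces the precise form $w_{\tilde{T}}+2e_{\tilde{T}}-2e_{\tilde{B}}$ of the congruence; the opposite matching would replace $w_{\tilde{T}}$ by $-w_{\tilde{T}}$. Everything else is bookkeeping with Euler characteristics and the binomial coefficients of $(-A^2-A^{-2})^k$, and since \eqref{eq:kauffman1} and \eqref{eq:kauffman2} generate $\mathcal{K}_{B}$, the computation they drive is exactly the expansion of $\bracket{T}$ in $\mathcal{M}_{B}$.
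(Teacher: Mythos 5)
Your proof is correct and is in substance the same as the paper's: the paper defines a $\ZZ/4\ZZ$ grading sending $nA^k\tilde{T}$ to $k+w_{\tilde{T}}+2e_{\tilde{T}}$ and observes that the Kauffman relations are homogeneous for it, and your inductive step (the four-case computation of $\Delta'$ and $\Delta''$) together with your base case is precisely the verification of that homogeneity for Eq.~\eqref{eq:kauffman2} and Eq.~\eqref{eq:kauffman1} respectively. The ``third local fact'' you flag is indeed the convention check that the paper leaves implicit behind the word ``observe.''
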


\begin{proof}
The ring $\ZZ[A,A^{-1}]$ admits a $\ZZ/4\ZZ$ grading by the exponent of $A.$  That is, the grading of  a monomial $nA^k$ is equal to $k \bmod 4,$ and the grading of a product of monomials is the sum of their gradings.  Thus the ring can be written (as a $\ZZ$ module) as a sum of its grade $0,1,2,3$ parts.  Once a checkerboarding is chosen for $T$ the module $\mathcal{T}_T$ admits a $\ZZ/4\ZZ$ grading which sends $nA^k \tilde{T}$ to $k+w_{\tilde{T}} +  2 e_{\tilde{T}} \pmod 4,$ such that when you multiply a monomial times such a generator the gradings add.  Now observe that each of the generators (Eqs.~\eqref{eq:kauffman1} and~\eqref{eq:kauffman2}) of $\mathcal{K}_{B}$ is a linear combination of terms of the same grade.
 Writing $\bracket{T}$ as a sum of monomials times base tangles, the sum of terms of grade different from $T$ is an element of the Kauffman bracket and therefore is zero.

\end{proof}

\begin{cor}
The span of a single coefficient of a base tangle in the expansion of $T$ is a non-negative multiple of 4.
\end{cor}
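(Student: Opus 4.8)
The plan is to read this off directly from Theorem~\ref{thm:mod4main}, so the ``proof'' is really just an unwinding of definitions. Recall that the \emph{span} of a Laurent polynomial $p \in \ZZ[A,A^{-1}]$ means the difference $k_{\max} - k_{\min}$ between its largest and smallest exponents carrying a nonzero coefficient, with the convention that the zero polynomial (and, degenerately, any monomial) has span $0$.

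First I would dispose of the trivial case: if the coefficient of a base tangle $B$ in $\bracket{T}$ is zero or a single monomial, its span is $0$, which is a non-negative multiple of $4$, and there is nothing more to do. Otherwise the coefficient has at least two distinct exponents, so $k_{\max} > k_{\min}$ and the span is a positive integer. Then I would invoke Theorem~\ref{thm:mod4main}: every power of $A$ occurring in this coefficient lies in a single residue class modulo $4$ (the class $w_{\tilde T} + 2 e_{\tilde T} - 2 e_{\tilde B} \pmod 4$ for any chosen checkerboarding of $T$). In particular $k_{\max} \equiv k_{\min} \pmod 4$, hence $4 \mid (k_{\max} - k_{\min})$, so the span is a positive multiple of $4$. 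Combining the two cases gives the claim.

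There is no real obstacle here; the only point requiring a moment's care is fixing the convention for the span of the zero or monomial coefficient, after which the corollary is an immediate consequence of the mod-$4$ invariance already established.
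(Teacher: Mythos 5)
Your proof is correct and matches the paper's intent: the corollary is stated without proof precisely because it follows immediately from Theorem~\ref{thm:mod4main}, exactly as you argue (all nonzero exponents lie in one residue class mod $4$, so $k_{\max}-k_{\min}$ is divisible by $4$, with the degenerate zero/monomial case giving span $0$). Your explicit handling of the trivial case is a reasonable bit of added care but does not change the substance.
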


\section{Span of Coefficients}

\begin{defn}
The \textbf{positive Kauffman Bracket Polynomial} (pKBP) is the unique assignment of a polynomial in $\ZZ[A,A^{-1}]$ 
to each framed tangle diagram $T,$ denoted $[T],$ satisfying Eqs.~\eqref{eq:pkauffman1} 
and~\eqref{eq:pkauffman2}  and sending the standardly embedded unknot to $1.$  Note this differs from the Kauffman bracket polynomial only in the sign of the first equation.  The pKBP is not a knot invariant. 
\begin{equation}\label{eq:pkauffman1}
\left[ \inimg{0T1T.png}{1em}\right]=(A^2+A^{-2})\left[ \inimg{0T0T.png}{1em}\right]
\end{equation}
\begin{equation} \label{eq:pkauffman2}
\left[ \inimg{2T1pT.png}{1em}\right]=A\left[ \inimg{2B2T.png}{1em}\right]+A^{-1}\left[ \inimg{2B1T.png}{1em}\right]
\end{equation}
\end{defn}

The advantage of the pKBP from the current point of view is that, because the
coefficients of the skein relations are all positive, there is no
cancellation, and any term appearing in the calculation adds to the final
result. 

The skein module for the pKBP is the free module of planar isotopy
classes of tangle {diagrams modulo the pKBP relation.  As before
$[T]$ will be a linear combination of base tangles of the same type,
with coefficients in $\ZZ[A,A^{-1}]$ (now with all nonnegative
coefficients).  

\begin{defn}
Let $T$ be a tangle with decomposition into base tangles $\bracket{T}= \sum_B P^{T}_B(A){ B}.$ Define the \textbf{total span} of $T$ to
  be the highest power occurring in any $P^T_B$ minus the lowest
  power.  Define the \textbf{span} of $T$ to be the maximum of the
  span of each $P^T_B.$
\end{defn}

\begin{defn}
Let $T$ be tangle with decomposition into base tangles $[T]= \sum_B \overline{P}^{T}_B(A){ B}.$ Define the \textbf{total pKBP span} of $T$ to
  be the highest power occurring in any $\overline{P}^T_B$ minus the lowest
  power.  Define the \textbf{pKBP span} of $T$ to be the maximum of the
  span of each $\overline{P}^T_B.$ The span and total span are
  bounded by the pKBP span and total pKBP span respectively.
\end{defn}

\begin{defn}
For a tangle diagram $T$ define $i_T$ to be the number of interior
faces of $T,$ $n_T$ to be the number of crossings of $T,$ $g_T$ to
be the number of boundary points of $T,$ $c_T$ to be the number of
connected components of $T,$ and $c'_T$ to be the number of connected
components of $T$  which do not touch the boundary (see Figure~\ref{quanttngl}).  
\end{defn}

\begin{figure}[!ht]
\centering
  \includegraphics[width=0.4 \textwidth]{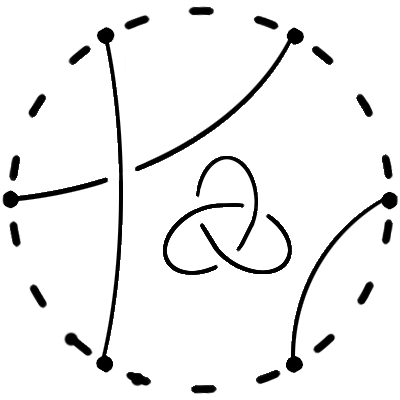}
 
  \caption{A tangle diagram $T$ with $i_T=4,$ $n+T=4,$ $g_T=6,$ $c_T=3,$ and $c'_T=1.$ }\label{quanttngl}
\end{figure}

\begin{prop}\label{totalspan}
The total pKBP span of  a link diagram $T$ 
is bounded by $4(n_T+c_T).$
\end{prop}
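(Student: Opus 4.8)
The plan is to write $[T]$ as an explicit state sum and read off its extreme exponents directly, exploiting that the pKBP calculation involves no cancellation. Since the link diagram $T$ has no boundary points, its only base tangle is the empty diagram, so the total pKBP span of $T$ is just the span of the single polynomial $[T]$. Expanding every crossing by Eq.~\eqref{eq:pkauffman2} and then collapsing the resulting crossingless diagrams by Eq.~\eqref{eq:pkauffman1} (using $[\,k\text{ disjoint circles}\,]=(A^2+A^{-2})^{k-1}$, which follows from Eq.~\eqref{eq:pkauffman1} and the normalization $[\,\text{unknot}\,]=1$) gives
\[
[T]=\sum_{s} A^{\,a(s)-b(s)}\,(A^2+A^{-2})^{\,\ell(s)-1},
\]
where $s$ ranges over the $2^{n_T}$ states (one smoothing chosen at each crossing), $a(s)+b(s)=n_T$ counts the two kinds of smoothing, and $\ell(s)\ge 1$ is the number of circles in the associated crossingless diagram. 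Every term has nonnegative coefficients, so — as already remarked for the pKBP — nothing cancels, and the top and bottom exponents of $[T]$ are respectively $\max_s\big(a(s)-b(s)+2\ell(s)-2\big)$ and $\min_s\big(a(s)-b(s)-2\ell(s)+2\big)$.

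Next I would run the standard ``flip one smoothing'' estimate: changing a single smoothing in a state changes $a(s)-b(s)$ by $\mp2$ and changes $\ell(s)$ by exactly $1$ (a circle through that region is split in two, or two circles are merged). Hence $a(s)-b(s)+2\ell(s)-2$ never increases as one moves away from the all-$A$ state, so its maximum equals $n_T+2\mu_T-2$, where $\mu_T$ is the number of circles in the all-$A$ state; symmetrically the minimum of $a(s)-b(s)-2\ell(s)+2$ equals $-n_T-2\nu_T+2$, where $\nu_T$ is the number of circles in the all-$B$ state. This gives
\[
\text{total pKBP span of }T \;=\; 2n_T+2\mu_T+2\nu_T-4,
\]
so the whole proposition reduces to the combinatorial inequality $\mu_T+\nu_T\le n_T+2c_T$.

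For that I would first observe that $\mu_T,\nu_T,n_T,c_T$ are all additive over the connected components of the underlying $4$-valent graph of $T$, reducing to a connected diagram $D$, where the claim is $\mu_D+\nu_D\le n_D+2$. I would induct on $n_D$, the base $n_D=0$ being a single circle. For the step, fix a crossing and let $D_A,D_B$ be the two diagrams obtained by smoothing it; then $n_{D_A}=n_{D_B}=n_D-1$, $\mu_D=\mu_{D_A}$, $\nu_D=\nu_{D_B}$, and each of $\mu_{D_A}-\mu_{D_B}$, $\nu_{D_A}-\nu_{D_B}$ is $\pm1$. The one non-formal ingredient is that \emph{at least one of $D_A,D_B$ is connected}: deleting the crossing leaves a diagram with four new ends $e_1,e_2,e_3,e_4$ in cyclic order around the removed vertex, and grouping the $e_i$ according to connected component produces cyclically contiguous blocks (by the Jordan curve theorem on $S^2$) of even size (a connected tangle diagram has an even number of boundary points), hence either a single block of four or two blocks of two; in the latter case one of the two smoothings re-joins the two blocks and so leaves the diagram connected. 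Applying the inductive hypothesis to whichever of $D_A,D_B$ is connected, together with the $\pm1$ relations, yields $\mu_D+\nu_D\le(n_D-1+2)+1=n_D+2$. Summing over components and substituting, the total pKBP span of $T$ is at most $2n_T+2(n_T+2c_T)-4=4(n_T+c_T)-4\le 4(n_T+c_T)$.

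I expect the main obstacle to be exactly that geometric lemma — that some single-crossing smoothing of a connected diagram stays connected — because it is the only place planarity really enters, and the contiguous-block structure of the end-partition is what makes it work. (Alternatively $\mu_T+\nu_T\le n_T+2c_T$ follows from the Euler characteristic of the Turaev surface of each component being at most $2$, but the induction above keeps the argument elementary.) Everything else — setting up the state sum, the no-cancellation remark, and the flip estimate — is routine bookkeeping.
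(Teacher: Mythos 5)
Your proof is correct and is, in substance, exactly the argument the paper invokes: the paper's proof of Proposition~\ref{totalspan} simply cites Theorem 1 of Murasugi (noting that the sign of the loop value plays no role there), and Murasugi's proof is precisely the state-sum / extreme-state computation you carry out, including the connectivity lemma giving $\mu_D+\nu_D\le n_D+2$ for a connected diagram. So you have taken the same route, just written out in full rather than by reference.
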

\begin{proof}
The proof is exactly the same as the proof of the analogous result for
the KBP, which is Theorem 1 of \cite{Murasugi87}.  The proof appearing
in that work only uses the defining relations of the polynomial in
Eq. (15) and the passage from there to Eq. (16), where the sign of the
constant has no bearing. 
\end{proof}

\begin{defn} \label{def:star}
If $T$ is a tangle and $B$ is a base tangle of the same type, define
$B*T$ to be the link built as follows.  Reflect $B$ about the circle
boundary so it is a graph outside the disk that touches the circle at
the same points as $T$ does.  Glue the two graphs together to get a
closed graph in a larger disk, and replace each of the now bivalent
vertices on the circle with a single continuous edge.  Observe that
$B*B$ will be a union of $g/2$ loops, if $B$ has $g$ boundary points,
and if $B'$ is any base tangle of the same type as $B,$ $B*B'$
consists of $g/2$ or fewer loops.
\end{defn}

\begin{figure}[!ht]
\centering
  \includegraphics[width=0.4 \textwidth]{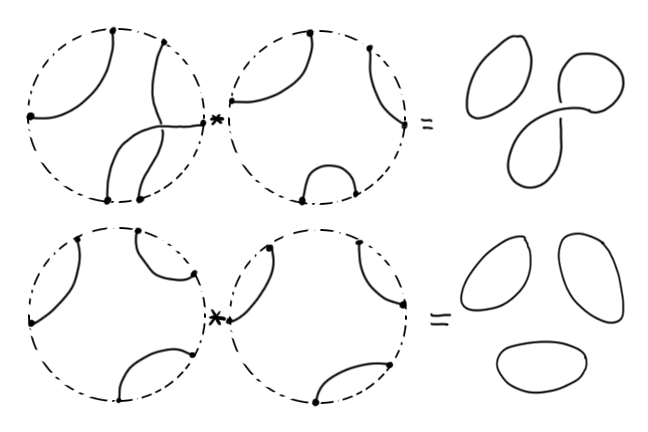}
 
  \caption{An example of $T*B$ and $B*B$ }\label{girth}
\end{figure}

In fact it is not too difficult to show, following Murasagi's proof,
that the total pKBP span of a tangle is bounded by $2(i_T+n_T+c'_T).$  However, the
above result, explored with the use of $B*T,$ gives a
stronger bound on the ordinary span, as needed here.

\begin{prop}
The span of $[T],$ and therefore the span of $\bracket{T},$ is bounded by
$4i_T= 4(n_T +c_T)-2g_T.$ 
\end{prop}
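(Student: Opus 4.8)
The plan is to bound each coefficient of $[T]$ separately. Writing $[T]=\sum_{B'}\overline{P}^{T}_{B'}(A)\,B'$, I will show that $\operatorname{span}(\overline{P}^{T}_{B})\le 4(n_T+c_T)-2g_T$ for every base tangle $B$ of the type of $T$; this gives the statement, since $\operatorname{span}([T])$ is the maximum of these spans and, as already remarked, $\operatorname{span}(\bracket{T})\le\operatorname{span}([T])$, while $4i_T=4(n_T+c_T)-2g_T$ is a routine Euler-characteristic count for the planar graph of $T$ together with its bounding circle. If $g_T=0$ there is nothing to do beyond Proposition~\ref{totalspan}, so assume $g_T\ge 2$ and fix a base tangle $B$ with $\overline{P}^{T}_{B}\neq 0$.

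The first step is to pass to the link diagram $B*T$ of Definition~\ref{def:star}. By locality in the pKBP skein module — expand $T$ inside its disk, then glue the reflected copy of $B$ around it — we obtain
\[
[B*T]=\sum_{B'}\overline{P}^{T}_{B'}(A)\,[B*B'],
\]
where $B'$ runs over base tangles of the type of $T$. Each $B*B'$ is crossingless, hence a disjoint union of $k_{B'}$ trivial loops with $1\le k_{B'}\le g_T/2$ and $k_{B}=g_T/2$, so $[B*B']=(A^2+A^{-2})^{k_{B'}-1}$, a Laurent polynomial with nonnegative integer coefficients and span $4(k_{B'}-1)$. Every coefficient appearing above is a polynomial with nonnegative coefficients, so there is no cancellation: the top exponent of $[B*T]$ is the largest top exponent among the summands, and the bottom exponent is the smallest. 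Comparing with the single summand indexed by $B$ gives
\[
\operatorname{span}([B*T])\ge\operatorname{span}(\overline{P}^{T}_{B})+4\left(\tfrac{g_T}{2}-1\right)=\operatorname{span}(\overline{P}^{T}_{B})+2g_T-4.
\]

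The second step is an upper bound for $\operatorname{span}([B*T])$. The proof of Proposition~\ref{totalspan} follows Murasugi's and in fact yields the sharp fact that a \emph{connected} link diagram with $m$ crossings has pKBP span at most $4m$; decomposing $B*T$ into its $c_{B*T}$ connected components and using that passing to a split union multiplies the polynomials and inserts one factor $A^2+A^{-2}$ per extra component, we get
\[
\operatorname{span}([B*T])\le 4\,n_{B*T}+4(c_{B*T}-1)=4(n_T+c_{B*T})-4,
\]
since $n_{B*T}=n_T$ ($B$ is crossingless). Finally $c_{B*T}\le c_T$: the diagram $B*T$ arises from $T$ by adjoining the $g_T/2$ disjoint arcs of the reflection of $B$, which pair up the boundary points, and then smoothing the resulting bivalent vertices, and adjoining an arc to a graph never increases the number of connected components. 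Combining the three displays,
\[
\operatorname{span}(\overline{P}^{T}_{B})\le 4(n_T+c_{B*T})-4-(2g_T-4)=4(n_T+c_{B*T})-2g_T\le 4(n_T+c_T)-2g_T=4i_T,
\]
which is the claimed bound.

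The main obstacle is the lower bound for $\operatorname{span}([B*T])$ in the first step. This is precisely where the positivity of the pKBP — the absence of cancellation — is indispensable, and where the operation $B*T$ earns its keep: $B*B$ realizes the maximal possible number $g_T/2$ of loops, so the $B$-summand genuinely exhibits the span of $\overline{P}^{T}_{B}$ pushed outward by $2g_T-4$, and no competing summand can shrink it. A secondary care point is that Proposition~\ref{totalspan} as stated loses an additive $4$; one must use the sharp ``span $\le 4m$ for connected diagrams'' form coming out of Murasugi's argument, and that recovered constant is exactly what makes the final line of arithmetic close.
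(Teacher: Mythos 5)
Your proof is correct and follows essentially the same route as the paper's: glue the reflection of the base tangle $B$ realizing the maximal span, use positivity of the pKBP to get the lower bound $\operatorname{span}([B*T])\ge\operatorname{span}(\overline{P}^T_B)+2g_T-4$ (the paper gets $+2g_T$ because it normalizes so that $g_T/2$ loops have bracket $(A^2+A^{-2})^{g_T/2}$, which is also why its stated Proposition~\ref{totalspan} bound $4(n+c)$ already is the ``sharp'' form in that convention), and then apply the Murasugi-style upper bound. The only other cosmetic difference is that you establish $c_{B*T}\le c_T$ by noting that adjoining arcs cannot increase the number of components, while the paper observes the equality $c_{B*T}=c_T$ using that base tangles occurring in the expansion only join boundary points of the same component; both suffice.
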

\begin{proof}
Let $B$ be a  base tangle whose coefficient $\overline{P}^T_B$ in the expansion of $[T]$
has the highest span.  Then by locality
\[[B*T]= \sum_{B'} \overline{P}^T_{B'}(A) [B*B']\]
and therefore the total span of $[B*T]$ is at least the span of $T$ plus
$2g,$ the $2g$ coming from the span of $[B*B]$ (this argument uses the fact that it is the positive
Kauffman bracket for the first time, because in the ordinary Kauffman
bracket terms could cancel in the product).  On the other hand the
total span of $[B*T]$ is, by Proposition~\ref{totalspan}, bounded by
\[4(n_{B*T}+c_{B*T}).\]
But $n_{B*T}=n_T,$ and notice that any $B$ which occurs in the
expansion of $T$ connects two boundary points only if they bound the
same connected component of $t,$ so $c_{B*T}=c_T.$  It thus follows
that the pKBP span of $T$ is bounded by 
\[4(n_T+c_T)-2g_T.\]
The equation $i_T=n_T+c_T-g_T/2$ can be proven by computing the  Euler number
or by induction on the
complexity of $T$ (each internal face which is not bounded by a simple closed
curve can be removed by smoothing one crossing that it bounds without
changing $c_T.$  Each internal face that is bounded by a simple closed curve
can be removed, reducing $c_T$ by $1.$  Each crossing not bounding an
internal face can be removed, increasing $c_T$ by $1.$)
\end{proof}

The results of the previous two sections give a bound on the amount of
information contained in the Kauffman bracket skein module expansion
of a tangle in terms of the crossing number and the type of its
boundary.  First notice that the number of basis tangle generating
$\mathcal{M}_B$ if $B$ has $g$ boundary points is the number of ways
of connecting $g$ points on the circle by paths on the disk so that no
two paths cross.  This quantity is the \emph{Catalan number,} 
$C_{m}$ for $m=g/2,$ which is given by the formula \cite{Koshy09}
\[\frac{1}{m+1} {\binom{2m}{m}}\]
and asymptotically
\[C_m \sim \frac{4^m}{m^{3/2} \sqrt{\pi}}.\]

\begin{prop}\label{compute}
If $T$ is a tangle with $g$ boundary points and a diagram with $n$
crossings, its image $\bracket{T}$ in $\mathcal{M}_B$ can be stored with
$(n-g/2+1)C_{g/2}$ integers, or asymptotically $n2^g.$  
\end{prop}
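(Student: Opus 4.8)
The plan is to read the bound straight off the structure of the coefficients $P^T_B$ in the canonical expansion $\bracket{T}=\sum_B P^T_B(A)\,B$, where the sum runs over the base tangles $B$ of the same type as $T$. First I would note that the number of such base tangles is precisely the number of ways of joining the $g$ boundary points by disjoint arcs in the disk, i.e.\ the Catalan number $C_{g/2}$ recalled just above; thus there are $C_{g/2}$ coefficients, and it remains only to bound how many integers are needed to record each Laurent polynomial $P^T_B\in\ZZ[A,A^{-1}]$.

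The heart of the argument is that each $P^T_B$ is extremely sparse. By Theorem~\ref{thm:mod4main} all exponents occurring in $P^T_B$ lie in a single residue class modulo $4$, and that class is determined by the combinatorial data $w_{\tilde T}+2e_{\tilde T}-2e_{\tilde B}$, so it carries no storage cost. By the proposition bounding the span of $\bracket{T}$ (and its corollary), the gap between the largest and smallest exponent of $P^T_B$ is a multiple of $4$ that is at most $4 i_T$. Hence $P^T_B$ has at most $i_T+1$ nonzero terms, each a single integer coefficient; using the relation between $i_T$, the number of crossings $n$, and the number of boundary points $g$ for the diagram in question (the identity $i_T=n_T+c_T-g_T/2$ proved above), this is at most $n-g/2+1$. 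Summing over the $C_{g/2}$ base tangles gives the claimed $(n-g/2+1)C_{g/2}$ integers. (One still records, say, the bottom exponent of each coefficient, but that is a single additional integer per coefficient, or can be avoided by fixing one common window of exponents of width $O(n)$ using Proposition~\ref{totalspan}; either way it does not affect the asymptotics.)

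It then remains to convert the exact count into the asymptotic one. For this I would simply substitute $m=g/2$ into the asymptotic formula $C_m\sim 4^m/(m^{3/2}\sqrt\pi)$ recalled above, giving $C_{g/2}=O(2^g)$; multiplying by the $O(n)$ terms stored per coefficient yields total storage $O(n\,2^g)$.

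The only point that needs genuine care is the exponent bookkeeping in the middle paragraph: one must check that recording \emph{which} powers of $A$ appear (not merely how many) costs only a bounded amount per coefficient. This is exactly what the mod-$4$ rigidity of Theorem~\ref{thm:mod4main} provides once it is combined with the span bound, so that step is short. Everything else is a direct assembly of the results of the previous two sections with the Catalan count, and I anticipate no further obstacle.
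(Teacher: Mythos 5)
Your proposal is correct and follows essentially the same route as the paper's proof: count the $C_{g/2}$ basis tangles, use Theorem~\ref{thm:mod4main} to fix the exponents modulo $4$, use the span bound to limit each coefficient to about $n-g/2+1$ terms, and then apply the Catalan asymptotics to get $O(n2^g)$. (Both you and the paper quietly drop the $c_T$ term from the span bound $4(n_T+c_T)-2g_T$ in the exact count, but this does not affect the asymptotic statement.)
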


\begin{proof}
The expansion of $T$ will be a linear combination of $C_{g/2}$ basis
tangles.  The coefficient of each will be an element of $\ZZ[A,
A^{-1}].$  This coefficient is a sum of powers of $A,$ with all
nonzero terms having exponents that agree $\bmod 4,$ and the span of
exponents being $4n-2g.$  Such a polynomial is described by an integer
representing the highest exponent that occurs and an integer
representing each of the $n-g/2$ nonzero coefficients of powers of $A.$
\end{proof}

\section{Girth}

\begin {defn}
If $L$ is the diagram of a link, define a \textbf{cutting} of $L$ to be
a sequence of tangles $T_0 \subset T_1 \subset \cdots \subset T_n$
where $T_0$ is empty, $T_n$ is $L$ (more precisely, $L$ embedded in a
disk), each component of the graph $T_i-T_{i-1}$ is an edge connecting
a boundary vertex of $T_{i-1}$ to $T_{i}$ except one, which contains at most
one crossing.  Intuitively a cutting builds $L$ up one crossing at a
time.   It is easy to see that such a cutting exists for any $L$
(choose a generic height function, slide cups up and caps down as far
as they will go, and choose generic heights for the tangle
boundaries).  The \textbf{girth of a cutting} is the maximum number
of boundary vertices of all of these tangles, and the \textbf{girth of
  $L$} is the minimum girth over all cuttings.  The girth of a link is
the minimum girth of all diagrams.  See Figure~\ref{fg:girth} for a
cutting of the figure eight knot of girth $4$ (only the part of the
tangle boundary that intersects $L$ are shown).
\end{defn}

\begin{figure}[!ht]
\centering
  \includegraphics[width=0.4 \textwidth]{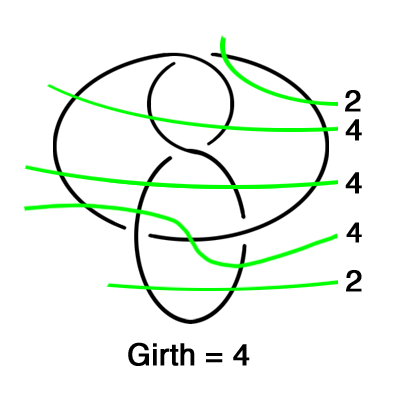}
 
  \caption{A cutting of $L$ with girth $4$ }\label{fg:girth}
\end{figure}

\begin{prop}
If $L$ is a  connected link projection with $n$ crossings and girth
$g$ then $\bracket{L}$ can be computed with $O(n2^g)$ memory and in time
$O( 2^g)$ times a polynomial in $n$ and $g.$
\end{prop}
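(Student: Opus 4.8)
The plan is to fix a cutting $T_0\subset T_1\subset\cdots\subset T_n$ of a diagram of $L$ that realizes the girth $g$ (such a cutting exists by the definition of girth) and to compute, for $i=0,1,\dots,n$ in turn, the skein expansion $\bracket{T_i}\in\mathcal M_{\beta_i}$, where $\beta_i$ denotes the type of $T_i$. The base case is immediate: $T_0$ is empty, so $\bracket{T_0}$ is the empty base tangle with coefficient $1$. At the end $T_n=L$ has no boundary points, $\mathcal M_{\beta_n}$ is one--dimensional with generator the empty diagram, and (up to the trivial--link normalization noted earlier) $\bracket{L}$ is simply the coefficient of that generator. So the whole computation consists of the $n$ update steps carrying $\bracket{T_{i-1}}$ to $\bracket{T_i}$, and it suffices to bound one such step. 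Since each $T_i$ has at most $g$ boundary points and at most $n$ crossings, Proposition~\ref{compute} shows that $\bracket{T_i}$ occupies $(n_{T_i}-g_{T_i}/2+1)\,C_{g_{T_i}/2}=O(n2^g)$ integers, each of polynomially bounded bit length (every integer coefficient is bounded in absolute value by $2^{O(n)}$, as is already visible from the $2^n$--term naive state sum); keeping only $\bracket{T_{i-1}}$ and $\bracket{T_i}$ in memory at once gives the stated $O(n2^g)$ memory bound.

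For the update step, write $\bracket{T_{i-1}}=\sum_B P_B(A)\,B$, the sum over base tangles $B$ of type $\beta_{i-1}$, of which there are $C_{g_{T_{i-1}}/2}=O(2^g)$. The graph $T_i\setminus T_{i-1}$ consists of some edges, each joining a boundary vertex of $T_{i-1}$ to a boundary vertex of $T_i$ --- strand extensions, which are planar isotopies that merely relabel boundary points --- together with one further component containing at most one crossing. For each base tangle $B$ let $B'$ be the tangle diagram obtained by gluing these pieces onto $B$; by locality $\bracket{T_i}=\sum_B P_B(A)\,\bracket{B'}$. Now $B'$ has at most one crossing and at most $g$ boundary points, so $\bracket{B'}$ can be computed directly: applying \eqref{eq:kauffman2} at most once resolves the crossing into at most two crossingless diagrams, and each crossingless diagram is a base tangle of type $\beta_i$ together with a bounded number of disjoint interior loops (bounded because $B$ is loopless and the glued piece has a bounded number of edges), each loop being removed by \eqref{eq:kauffman1} at the cost of a factor $(-A^2-A^{-2})$. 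Thus $\bracket{B'}$ is a sum of at most two base tangles of type $\beta_i$ with coefficients of the form $\pm A^{\pm 1}(-A^2-A^{-2})^{k}$ with $k$ bounded; identifying the non--crossing matching that $B'$ induces and carrying out this reduction takes time polynomial in $g$, using any standard indexing of non--crossing matchings.

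To assemble the answer, maintain an array indexed by the $O(2^g)$ base tangles of type $\beta_i$; for each $B$ add $P_B(A)$ times the (at most two) short coefficients of $\bracket{B'}$ into the appropriate entries. Multiplying the polynomial $P_B$, which has at most $n$ nonzero terms by Proposition~\ref{compute}, by a factor with $O(1)$ terms and polynomially bounded integer coefficients, and adding the result into an array entry, costs a polynomial in $n$ and $g$; moreover Proposition~\ref{compute} applied to $T_i$ guarantees that every entry always has at most $n$ nonzero terms, so entries never grow beyond this size. Hence one update step costs $O(2^g)$ times a polynomial in $n$ and $g$, and the $n$ steps together cost $O(2^g)$ times a polynomial in $n$ and $g$, as claimed.

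I expect the only delicate parts to be the combinatorial bookkeeping --- tracking the boundary--point relabelings and the way gluing the exceptional piece acts on non--crossing matchings, so that base tangles can genuinely be indexed and updated in time polynomial in $g$ --- and confirming that the integer coefficients stay of polynomially bounded bit length throughout, which follows from the span bound $4n-2g$ and the mod--$4$ structure of Theorem~\ref{thm:mod4main} together with the crude exponential bound coming from the naive state sum. The substantive mathematics has already been carried out in the previous sections; what remains here is to package it as an algorithm and verify these complexity details.
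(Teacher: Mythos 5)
Your proposal is correct and follows essentially the same route as the paper: iterate along a girth-realizing cutting, store each $\bracket{T_i}$ using the bound of Proposition~\ref{compute}, and update by gluing the new piece onto each of the $O(2^g)$ base tangles and resolving the at most one new crossing with Eqs.~\eqref{eq:kauffman1} and~\eqref{eq:kauffman2}. The paper's own proof is only a sketch of this, and your version supplies details it omits (the bit-length of the integer coefficients and the bookkeeping for indexing non-crossing matchings), but the underlying argument is the same.
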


\begin{proof}
Let $T_1,T_2,\ldots , T_n$ be a cutting of minimal girth.  We will compute $\bracket{L}$ by computing $\bracket{T_i}$
in sequence.  Because each $T_i$ has $g$ or fewer boundary points and
$n$ or fewer crossings, $\bracket{T_i}$ can be stored in $O(n2^{g/2}),$ units
of memory, and each $\bracket{T_i}$ can be discarded once $\bracket{T_{i+1}}$ is
computed.  To compute $T_{i+1}$ from $\bracket{T_i},$ it is necessary, for each of the
$C_{g/2}$ basis tangles, to do a finite set of calculations on each integer
coefficient using Eqs.~\eqref{eq:kauffman1} and~\eqref{eq:kauffman2}.  Since
there are $n$ such steps, the result follows.

\end{proof}

This naturally raises the question of what the girth of a link is, and
in particular if it can be bounded in terms of the number of crossings
$n.$  Indeed it can.  In particular the girth of a link is what is
known as the \emph{cutwidth} of the underlying graph embedded in the
plane.  In \cite{DV03}, Djidjev and Vrto show that the cutwidth of a
planar graph with $n$ vertices all of valence $4$ has cutwidth bounded
by 
\[\mathrm{cw} \leq (6\sqrt{2}+ 5 \sqrt{3}) \sqrt{n},\]
abbreviated $C\sqrt{n}.$

\begin{remark}  In fact it is not quite true that cutwidth as defined
  in \cite{DV03} is the same as our notion of girth for links.  Our
  definition implicitly requires each $T_i$ to live in a disk, whereas
  their definition allows subgraphs on regions which are not
  connected or simply connected, complicating the analysis.  However,
  \cite{DV03} defines the cutting recursively by using Theorem
  1.2 of Gazit and Miller \cite{GM90} that shows a planar graph can be
  cut into two substantial pieces with the number of cuts bounded by
  the square root of the number of vertices. In fact the argument in
  this paper constructing these two pieces produces disk-shaped
  regions, so the algorithm in \cite{DV03} in fact produces a cutting
  in our sense.
\end{remark}

\begin{thm}
The time and memory required to compute the Kauffman bracket of any
link of crossing number $n$ is
$\mathcal{O}\parens{\mathrm{poly}(n)2^{C\sqrt{n}}}$ for some $C>0.$  
\end{thm}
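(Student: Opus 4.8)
The plan is to combine the ingredients already in place. Given a link $L$ of crossing number $n$, fix a diagram $D$ of $L$ with exactly $n$ crossings; since the framed Kauffman bracket is an invariant of the framed link, it suffices to bound the cost of computing $\bracket{L}$ starting from such a minimal $D$ (which may be taken to have size $O(n)$, the $k$-component unlink being the only degenerate case and a triviality). First I would reduce to the connected case: if $D$ is disconnected, compute $\bracket{\cdot}$ on each connected component and reassemble using $\bracket{L_1 \sqcup L_2} = (-A^2-A^{-2})\bracket{L_1}\bracket{L_2}$. Since each component has at most $n$ crossings, and by the span bound together with the mod-$4$ result the bracket of each component is a Laurent polynomial of span $O(n)$ with $O(n)$ nonzero terms, this reassembly costs only $\mathrm{poly}(n)$; so it is enough to prove the bound for connected $D$.

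Next, for connected $D$ with $n \ge 1$ crossings, its underlying shadow is a $4$-valent plane graph on $n$ vertices. By the theorem of Djidjev and Vrto \cite{DV03} quoted above, its cutwidth is at most $(6\sqrt{2}+5\sqrt{3})\sqrt{n} = C\sqrt{n}$; and, as explained in the Remark, because the recursive construction of \cite{DV03} --- built on Theorem~1.2 of Gazit and Miller \cite{GM90} --- cuts the graph into \emph{disk}-shaped regions, this cutwidth is realized by a cutting of $D$ in the sense of our definition, with girth at most $C\sqrt{n}$. Moreover that construction runs in time polynomial in $n$, so such a cutting can be found within the claimed budget.

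Finally I would feed this cutting into the Proposition above computing $\bracket{L}$ from a cutting of girth $g$: with $g \le C\sqrt{n}$ it uses $O(n\,2^{g})$ memory and $O(2^{g})$ time times a polynomial in $n$ and $g$. Since $g = O(\sqrt{n})$, every polynomial in $g$ is a polynomial in $n$, so the memory is $\mathcal{O}\parens{n\,2^{C\sqrt{n}}}$ and the time is $\mathcal{O}\parens{\mathrm{poly}(n)\,2^{C\sqrt{n}}}$. Folding in the $\mathrm{poly}(n)$ costs of constructing the cutting and of the disconnected reassembly does not change this, which is the theorem.

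I do not anticipate a genuine obstacle: all of the substance --- invariance mod $4$, the span bound $4n-2g$ on the coefficients, the Catalan count $C_{g/2}=O(2^g)$ of basis tangles, the per-crossing update cost, and the cutwidth estimate --- has already been established. The only thing requiring care is bookkeeping: verifying that polynomial-in-$g$ factors collapse to polynomials in $n$ once $g=O(\sqrt{n})$, and --- the single slightly delicate point, which the Remark addresses --- confirming that the Djidjev--Vrto cutting proceeds through disks, so that the Proposition, whose tangles must live in a disk, actually applies.
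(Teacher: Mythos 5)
Your proposal is correct and follows essentially the same route as the paper, whose proof is simply the observation that the Djidjev--Vrto cutwidth bound $g \leq C\sqrt{n}$ combined with the preceding proposition on computing $\bracket{L}$ from a cutting of girth $g$ yields the claim. You are in fact somewhat more careful than the paper, which silently assumes connectedness and does not address the cost of finding the cutting; these additions are welcome but do not change the argument.
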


\begin{proof}
This follows from the bound above and Proposition~\ref{compute}.
\end{proof}

\bibliographystyle{elsarticle-num}

\begin{thebibliography}{10}
\expandafter\ifx\csname url\endcsname\relax
  \def\url#1{\texttt{#1}}\fi
\expandafter\ifx\csname urlprefix\endcsname\relax\def\urlprefix{URL }\fi
\expandafter\ifx\csname href\endcsname\relax
  \def\href#1#2{#2} \def\path#1{#1}\fi

\bibitem{FGMW10}
M.~Freedman, R.~Gompf, S.~Morrison, K.~Walker, Man and machine thinking about
  the smooth 4-dimensional {P}oincar\'e conjecture, Quantum Topol. 1~(2) (2010)
  171--208.

\bibitem{BarNatan02}
D.~Bar-Natan, On {K}hovanov's categorification of the {J}ones polynomial,
  Algebr. Geom. Topol. 2 (2002) 337--370 (electronic).

\bibitem{BarNatan07}
D.~Bar-Natan, Fast {K}hovanov homology computations, J. Knot Theory
  Ramifications 16~(3) (2007) 243--255.

\bibitem{Turaev94}
V.~G. Turaev, Quantum Invariants of Knots and $3$-Manifolds, Studies in
  Mathematics, Walter de Gruyter and Co., Berlin, 1994, iSBN 3-11-013704-6.

\bibitem{BZ86}
G.~Burde, H.~Zieschang, Knots, Walter de Gruyter, New York, 1986.

\bibitem{Trace83}
B.~Trace, On the {Reidemeister} moves of a classical knot, Proc. Amer. Math.
  Soc. 89 (1983) 722--724.

\bibitem{kauffman85}
L.~H. Kauffman, An invariant of regular isotopy, announcement--1985,
  preprint--1986.

\bibitem{Murasugi87}
K.~Murasugi, {Jones} polynomials and classical conjectures in knot theory,
  Topology 26 (1987) 187--194.

\bibitem{Koshy09}
T.~Koshy, Catalan numbers with applications, Oxford University Press, Oxford,
  2009.

\bibitem{DV03}
H.~Djidjev, I.~Vrto, Crossing numbers and cutwidths, Journal of Graph
  Algorithms and Applications 7~(3) (2003) 245--251.

\bibitem{GM90}
H.~Gazit, G.~L. Miller, Planar seperators and the {Euclidean} norm, in:
  International Symposium on Algorithms, Vol. 450 of Lecture Notes in Computer
  Science, Springer-Verlag, 1990, pp. 338--347.

\end{thebibliography}
\def\cprime{$'$}

\end{document}